\def\today{\ifcase\month\or
  January\or February\or March\or April\or May\or June\or
  July\or August\or September\or October\or November\or December\fi
  \space\number\day, \number\year}
\DeclareMathOperator{\supp}{\mathrm{supp}}
\newtheorem{theorem}{Theorem}
\newtheorem{lemma}[theorem]{Lemma}
\newcommand{\z}{\mathbb{Z}}
\newcommand{\q}{\mathbb{Q}}
\renewcommand{\r}{\mathbb{R}}
\newcommand{\cp}{\mathbb{C}} 
\newcommand{\re}{{\rm Re}\,}
\newcommand{\ft}{\widehat}
\newcommand{\wt}{\widetilde}
\newcommand{\la}{\lambda}
\newcommand{\ga}{\gamma}
\newcommand{\ep}{\epsilon}
\newcommand{\si}{\sigma}
\renewcommand{\ft}{\widehat}
\DeclareRobustCommand{\rchi}{{\mathpalette\irchi\relax}}
\newcommand{\irchi}[2]{\raisebox{\depth}{$#1\chi$}} 
\newcommand{\ov}{\overline}
\begin{document}


\title[Multiplicity of nontrivial zeros of primitive $L$-functions]{Multiplicity of nontrivial zeros of primitive $L$-functions via higher-level correlations}

\author{Felipe Gon\c{c}alves}
\address{IMPA - Estrada Dona Castorina 110\\
Rio de Janeiro, Brazil} \email{goncalves@impa.br}

\author{David de Laat}
\address{Delft Institute of Applied Mathematics\\
Delft University of Technology\\ Delft, The Netherlands} \email{d.delaat@tudelft.nl}

\author{Nando Leijenhorst}
\address{Delft Institute of Applied Mathematics\\
Delft University of Technology\\ Delft, The Netherlands} \email{n.m.leijenhorst@tudelft.nl}

\date{January 8, 2024}
\subjclass[2020]{11M26, 90C22}
\keywords{}
\allowdisplaybreaks


\begin{abstract}
We give universal bounds on the fraction of nontrivial zeros having given multiplicity for $L$-functions attached to a cuspidal automorphic representation of $\mathrm{GL}_m/\q$. For this, we apply the higher-level correlation asymptotic of Hejhal, Rudnick, and Sarnak in conjunction with semidefinite programming bounds. 
\end{abstract}

\maketitle



\section{Introduction}
Let $m\geq 1$ and let $\pi$ be an irreducible cuspidal automorphic representation of $\mathrm{GL}_m/\q$. Let $L(s,\pi)$ be its attached $L$-function (see \cite{IK} for theoretical background). Such functions generalize the classical Dirichlet $L$-functions (the case $m=1$). Let $\rho_j = 1/2+ i \gamma_j$, with $j \in \z$, be an enumeration of the nontrivial zeros of $L(s,\pi)$ repeated according to multiplicity. The Generalized Riemann Hypothesis (GRH) for $L(s,\pi)$ states that all its nontrivial zeros are aligned in the line $\re s=1/2$; that is, $\ga_j\in \r$ for all $j$. Assuming GRH, we can enumerate the ordinates of the zeros in a nondecreasing fashion
\[
\ldots \leq \ga_{-2} \leq \ga_{-1} < 0 \leq \ga_{1} \leq \ga_{2} \leq \ldots
\]
and we have
\[
N(T) := \sum_{\substack{j\geq 1 \\ \ga_j\leq T}} 1 \sim \frac{m}{2\pi} T \log T.
\]

Hejhal \cite{hejhal_triple_1994} and later on Rudnick and Sarnak \cite{MR1305671,rudnick_zeros_1996} investigated the $n$-level correlation distribution of the zeros of $L(s,\pi)$ guided by the relationship with Gaussian ensemble models in random matrix theory, pointed out by the breakthrough work of Montgomery \cite{MR0337821}. This area has a long history, and we recommend \cite{rudnick_zeros_1996}, and the references therein, for more information.

Let $a_\pi(n)$ be the coefficients such that 
\[
L(s, \pi) =\sum_{n \ge 1} \frac{a_\pi(n)}{n^s}.
\]
The results of Rudnick and Sarnack hold under the hypotheses that GRH holds for $L(s, \pi)$ and that
\[
\sum_p \frac{|a_\pi(p^k) \log p|^2}{p^k} < \infty
\]
for all $k\geq 2$. This second condition holds whenever $m \leq 3$ \cite[Proposition 2.4]{rudnick_zeros_1996}. They then show that 
\begin{align}\label{rudsarresult}
&\frac{1}{N(T)} \sum_{\substack{j_1,\dots,j_n \ge 1 \text{ distinct} \\ \gamma_{j_1},\ldots,\gamma_{j_n} \leq T}} f(\tfrac{m\log T}{2\pi } \gamma_{j_1},\dots,\tfrac{m \log T}{2\pi } \gamma_{j_n})\\\nonumber
&\quad\to \int_{\mathbb R^n} f(x) W_n(x) \delta\bigg(\frac{x_1 + \ldots + x_n}{n}\bigg) \, dx_1 \cdots dx_n
\end{align}
as $T \to \infty$, where $$W_n(x)=\det \bigg[\frac{\sin(\pi(x_i-x_j))}{\pi(x_i-x_j)}\bigg]_{i,j=1,\ldots,n}$$ is Dyson's limiting $n$-level correlation density for the Gaussian Unitary Ensemble (GUE) model, and $f$ is any \emph{admissible} test function satisfying
\begin{itemize}
\item $f\in C^\infty(\r^n)$, $f$ is symmetric, and $f(x + t(1,\dots,1)) = f(x)$ for $t \in \mathbb R$;
\item $\overline{\text{supp}}(\ft f) \subseteq \{ x \in \r^n : |x_1|+\ldots+|x_n| < 2/m\}$ (in the distributional sense);
\item $f(x) \to 0$ rapidly as $|x| \to \infty$ in the hyperplane $\sum_j  x_j = 0$.
\end{itemize}

Let $m_\rho$ denote the multiplicity of $\rho$, and define
\[
N^*(T) := \sum_{\substack{j\geq 1 \\ \ga_j\leq T}} m_{\rho_j}.
\]
Montgomery's pair correlation approach has been used to obtain bounds of the form
\begin{equation}\label{eq:pair}
N^*(T) \leq (c + o(1)) N(T),
\end{equation}
where for $m=1$ the current smallest known value for $c$ assuming the Riemann hypothesis is $1.3208$ \cite{MR1132849,MR4037496,MR0337821,MR0419378}. Let $Z_n(T)$ count the number of nontrivial zeros of  $L(s,\pi)$ with multiplicity at most $n-1$ up to height $T$:
\[
Z_n(T) = \sum_{\substack{j\geq 1 \\ \ga_j\leq T,\, m_{\rho_j} \leq  n-1 }} 1.
\]
Then $Z_2(T) \geq 2N(T) - N^*(T)$, so \eqref{eq:pair} can for instance be used to give a lower bound on the fraction of simple zeros, although the current best known bound is obtained using different techniques \cite{MR3104987,MR1132849,MR4037496,ConreyGhoshGoneksimplezeros,MR1719184}.

The goal of this paper is to use the $n$-level correlations by Hejhal, Rudnick, and Sarnak to obtain bounds analogous to \eqref{eq:pair}, and use this to obtain universal bounds on the fraction of nontrivial zeros of $L(s, \pi)$ having multiplicity at most $n-1$.

\begin{theorem}\label{main}
Let $L(s,\pi)$ be the $L$-function attached to an irreducible cuspidal automorphic representation $\pi$ of $\mathrm{GL}_m/\q$. Assuming GRH for $L(s,\pi)$ we have
\[
\liminf_{T\to\infty} \frac{Z_n(T)}{N(T)} \geq \begin{cases} 
0.9614 &\text{ if $n=3$ and $m=1$,}\\ 
0.2997 &\text{ if $n=3$ and $m=2$,}\\
0.9787 &\text{ if $n=4$ and $m=1$,}
\end{cases}
\]
where the results for $(n,m)=(3,2), (4,1)$ hold under the additional assumption that certain series of rational functions are summed correctly using Maple; see Section~\ref{sec:setup}.
\end{theorem}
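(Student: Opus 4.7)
The plan is to apply the correlation asymptotic \eqref{rudsarresult} with a non-negative admissible test function $f$, bound the left-hand side from below by the contribution of $n$-tuples of distinct indices that all point to a single zero (which is precisely where multiplicity data resides), and then optimize the choice of $f$ with a semidefinite program.

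For the first step, fix an admissible $f\geq 0$. Among $n$-tuples $(j_1,\dots,j_n)$ of distinct indices with $\ga_{j_\ell}\leq T$, each zero $\rho$ of multiplicity $m_\rho$ contributes exactly $m_\rho(m_\rho-1)\cdots(m_\rho-n+1)$ tuples for which every $\ga_{j_\ell}$ equals $\ga_\rho$; on such tuples $f$ equals $f(0,\dots,0)=:f(0)$ by translation invariance. Since $f\geq 0$, keeping only these tuples gives
\[
\sum_{\substack{j_1,\dots,j_n\text{ distinct}\\\ga_{j_\ell}\leq T}} f\!\left(\tfrac{L}{2\pi}\ga_{j_1},\dots,\tfrac{L}{2\pi}\ga_{j_n}\right) \;\geq\; f(0)\sum_{\ga_\rho\leq T} m_\rho(m_\rho-1)\cdots(m_\rho-n+1).
\]
The summand vanishes for $m_\rho<n$ and is at least $(n-1)!\,m_\rho$ for $m_\rho\geq n$, so the right-hand side dominates $(n-1)!\,f(0)\bigl(N(T)-Z_n(T)\bigr)$, using $N(T)-Z_n(T)=\sum_{m_\rho\geq n}m_\rho$. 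Dividing by $N(T)$ and invoking \eqref{rudsarresult} yields, for every admissible $f\geq 0$,
\[
\liminf_{T\to\infty}\frac{Z_n(T)}{N(T)}\;\geq\; 1 - \frac{1}{(n-1)!\,f(0)}\int_{\r^n}f(x)\,W_n(x)\,\delta\!\left(\tfrac{x_1+\cdots+x_n}{n}\right)dx.
\]

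The remaining task is to minimize the ratio $I(f)/f(0)$ (with $I(f)$ the integral above) over admissible non-negative $f$. A natural convex parameterization is the square construction $f=|g|^2$, where $g$ is a complex-valued Schwartz function whose Fourier transform is supported in the open half-simplex $|x_1|+\cdots+|x_n|<1/m$ inside the hyperplane $x_1+\cdots+x_n=0$. Then $\ft f=\ft g*\overline{\ft g(-\cdot)}$ is automatically supported in the full admissible simplex $|x_1|+\cdots+|x_n|<2/m$ by the $\ell^1$ triangle inequality, $f$ is automatically non-negative and smooth, and symmetrizing $g$ secures the remaining admissibility requirements. Expanding $\ft g$ in a finite truncation of a convenient orthonormal basis then turns $I(f)$ and $f(0)$ into positive semidefinite quadratic forms in the coefficient vector, and minimizing their ratio reduces to a finite-dimensional semidefinite program.

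The main obstacle I foresee is the \emph{exact} evaluation of the entries of these matrices. Each entry is an integral of $W_n(x)\,\delta(\sum x_i/n)$ against a product of two basis functions; expanding $W_n$ as a determinant of $\sinc$ kernels and passing to the Fourier side produces sums of determinantal expressions that ultimately reduce to series of rational functions whose closed-form evaluation is delicate. For $(n,m)=(3,1)$ these sums can be managed by hand, but already for $(n,m)=(3,2)$ and $(4,1)$ the expressions become unwieldy, forcing reliance on a computer algebra system (Maple); this is precisely the auxiliary hypothesis flagged in Theorem~\ref{main}. Once the matrices are known, the concluding step is to solve the SDP numerically, round the solution to exact rationals, and verify positive semidefiniteness a posteriori to certify the constants $0.9614$, $0.2997$, and $0.9787$ rigorously.
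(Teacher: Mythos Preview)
Your outline matches the paper's approach: bound $M_n(T)/N(T)$ from above via the correlation asymptotic applied to a nonnegative admissible $f$, use the falling-factorial inequality to deduce $Z_n(T)/N(T)\ge 1-M_n(T)/\bigl((n-1)!\,N(T)\bigr)$, and then optimize over sum-of-squares $f=|g|^2$ through a finite-dimensional SDP whose matrix entries require computer algebra for $(n,m)\neq(3,1)$. The paper adds a few refinements you gloss over---an explicit reduction to $n{-}1$ variables with $\widehat g$ supported in the region $H_{n-1}$ (which also dissolves your Schwartz-function-on-a-hyperplane awkwardness), the observation that the one-constraint SDP has a rank-one optimum and so collapses to the linear system $Ac=b$, and a Poisson-summation device that converts the integrals into exactly the series of rational functions you anticipate---but the architecture is the same.
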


As far as we know, the bounds in Theorem~\ref{main} are all new. Using a different method, it has been shown that at least $95.5\%$ of the nontrivial zeros of the zeta function are simple or double zeros \cite{ConreyGhoshGonek}. Our $(n,m) = (3,1)$ case improves on this result. The case $m=2$ is of special interest, since for this there are no previous effective bounds in the literature. For $(n,m)=(2,2)$ we show in Section~\ref{sec:nequals2} our method does not produce a positive lower bound, but as shown above for $(n,m) = (3,2)$ it does. Proving a positive proportion of zeros are simple in the case $(n,m)=(2,2)$ is a notoriously difficult problem \cite{Faveri}, and no positive lower bounds for $Z_2(T)/N(T)$ are known (in general), but there has been a great deal of interesting work on simple zeros for $GL_2$ $L$-functions (via $\Omega$-results, for instance) \cite{Booker1, Booker2, Booker3, CCM,CG, Micah, Sono}.

Montgomery's approach to finding a good value for $c$ in \eqref{eq:pair} leads to an optimization problem that is similar to the one-dimensional version of the Cohn-Elkies bound \cite{MR1973059} for the sphere packing problem. Interestingly, however, the higher-order correlation approach in this paper is very different from the higher-order correlation approach for the sphere packing problem recently introduced in \cite{cohn2022}.

In Section~\ref{sec:derivation} we set up the optimization problem to compute the above bounds, and we show how the optimal solution connects to reproducing kernels, inspired by  the approach from \cite{CCLM}. In Section~\ref{sec:nequals2} we give a simpler analytical derivation of the known optimal solution for the case $n=2$.
 
For larger values of $n$ we use a computational approach to obtain rigorous bounds, whereas in Section~\ref{sec:symrankone} we use the symmetry and the rank-$1$ structure to find the bounds by solving linear systems. We then give two approaches for finding good solutions to the optimization problem. In Section~\ref{sec:fourieropt} we use a parametrization using polynomials to find a good value for the case $(n, m) = (3,1)$ and explain why this approach becomes problematic for $n > 3$. In Section~\ref{sec:setup} we give an alternative  parametrization based on lattice shifts and use this to compute bounds for $(n,m) = (3,2), (4,1)$. 

\section{Derivation of the optimization problem}
\label{sec:derivation}

Let
\[
M_n(T) = \sum_{\substack{j_1,\dots,j_n \ge 1 \text{ distinct} \\ \gamma_{j_1} = \ldots = \gamma_{j_n} \leq T}} 1 = \sum_{\substack{j\geq  1 \\ \ga_j \leq T}}(m_{\rho_j}-1) \cdots (m_{\rho_j} - n + 1).
\]
Then $M_1(T) = N(T)$ and $M_2(T) = N^*(T) - N(T)$. For a nonnegative admissible test function $f$, we have that 
\[
\frac{M_n(T)}{N(T)} f(1, \ldots, 1) \leq \frac{1}{N(T)} \sum_{\substack{j_1,\dots,j_n \ge 1 \text{ distinct} \\ \gamma_{j_1},\ldots,\gamma_{j_n} \leq T}} f(\tfrac{m\log T}{2\pi } \gamma_{j_1},\dots,\tfrac{m \log T}{2\pi } \gamma_{j_n}),
\]
where we use that 
\[
f(1,\ldots,1) = f(\tfrac{m\log T}{2\pi } \gamma_{j_1},\dots,\tfrac{m \log T}{2\pi } \gamma_{j_n})
\]
whenever $\gamma_{j_1} = \ldots = \gamma_{j_n}$. Dividing by $f(1,\ldots, 1)$ and applying \eqref{rudsarresult} therefore gives
\[
\limsup_{T\to\infty}\frac{M_n(T)}{N(T)} \leq c_{n,m},
\]
where
\[
c_{n,m} := \inf \frac{1}{f(1,\dots,1)} \int_{\mathbb R^n} f(x) W_n(x) \delta\Big(\frac{x_1 + \ldots + x_n}{n}\Big) \, dx
\]
and where the infimum is taken over nonnegative admissible test functions $f$. 

We have
\begin{align*}
& \int_{\mathbb R^n} f(x) W_n(x) \delta\bigg(\frac{x_1 + \ldots + x_n}{n}\bigg) \, dx \\ 
& \quad = \int_{\r^{n-1}} \int_\r f(x + (x_n,\ldots,x_n,0)) W_n(x + (x_n,\ldots,x_n,0)) \\
&\quad \qquad \cdot \delta\bigg(\frac{x_1 + \ldots + x_{n-1}}{n}+x_n\bigg) \, dx_n dx_1 \cdots dx_{n-1} \\
&\quad  =  \int_{\r^{n-1}} f(x_1,\dots,x_{n-1},0)W_n(x_1,\ldots,x_{n-1},0)\, dx_1 \cdots dx_{n-1}.
 \end{align*}
With  $g(x_1,\ldots,x_{n-1})=f(x_1,\ldots,x_{n-1},0)$, we see that $\overline{\supp}(\ft g)$ is now contained in the interior of $\tfrac2m  H_{n-1}$, where 
\[
H_{n-1}=\{ x \in \mathbb R^{n-1} : |x_1| + \ldots +|x_{n-1}| +  |x_1+ \ldots +x_{n-1}| \leq 1\}.
\]
Note that $H_{n-1}\subseteq [-1/2,1/2]^{n-1}$. Now let
\[
\nu_n(g) := \int_{\mathbb R^{n-1}} g(x) W_n(x,0)  \, dx
\]
for any $g \in L^1(\r^{n-1})$ satisfying
\begin{itemize}
\item[(i)] $\overline{\supp}(\widehat g) \subseteq \frac2m H_{n-1}$;
\item[(ii)] $g$ is nonnegative;
\item[(iii)] $g(0) = 1$.
\end{itemize}
It is now obvious that $c_{n,m}\geq \inf_{g} \nu_n(g)$, where the infimum is taken over functions $g$ satisfying (i), (ii) and (iii). We show that $c_{n,m}= \inf_{g} \nu_n(g)$. Let $g \in L^1(\r^{n-1})$ satisfy the conditions above and assume further that $g$ is invariant under the group $G$ generated by
\begin{align*}\label{gsymmetry}
F_ig(x)=g(x-x_i (1,\dots,1) - x_ie_i),
\end{align*}
for $i=1,\dots,n-1$. Note that $F_i^2={\rm Id}$ and $F_iF_jF_i$ simply flips $x_i$ by $x_j$. In particular, $G$ is a finite group. Let $\varphi_\ep \in C^\infty(\r^{n-1})$ be a radial approximate identity as $\ep\to 0$, such that $\supp( \varphi_\ep)\subset \ep H_{n-1}$, $\int \varphi_\ep=1$ and $ \varphi_\ep\geq 0$. Let $g_\ep = g\ft \varphi_\ep$, and observe that since $\ft g_\ep=\ft g * \varphi_\ep$, we then have that $g_\ep\in \mathcal{S}(\r^{n-1})$ and 
\[
\supp(\ft g_\ep) \subset \left(\frac2m+\ep\right) H_{n-1}.
\]
In particular, the function $f_\ep(x_1,...,x_n)=g_\ep(a (x_1-x_n),...,a (x_{n-1}-x_n))$, with $a=1/(1+m\ep/2)$, is now an admissible test function for \eqref{rudsarresult} and we obtain
\[
c_{n,m} \leq \int_{\mathbb R^n} f_\ep(x) W_n(x) \delta\Big(\frac{x_1 + \ldots + x_n}{n}\Big) \, dx = \nu_n(g_\ep(a\cdot)) \leq \nu_n(g(a \cdot)).
\]
Taking $\ep\to 0$ we conclude that $c_{n,m}\leq \inf_{g} \nu_n(g)$ over functions $g$ satisfying (i), (ii), (iii) and  invariant under $G$. Assume now $g$ is not invariant under $G$. Since $W_n(x_1, \dots,x_{n-1},0)$ is $G$-invariant, is it clear that 
$\nu_n(g)=\nu_n(\widetilde{g})$, where
$$
\widetilde{g}(x) = \frac{1}{|G|}\sum_{\rho \in G}\rho g(x),
$$
and so $\nu_n(g)\geq c_{n,m}$.

Let $\mathrm{PW}(\Omega)$ be the Paley-Wiener space of functions $g \in L^2(\r^{n-1})$ whose Fourier transform is supported in $\Omega$. Given a finite-dimensional subspace $F$ of $\mathrm{PW}(\frac1m H_{n-1})$ and a basis $\{g_i\}$ of $F$, we can optimize over functions of the form
\begin{equation}\label{eq:sdpform}
g(x) = \sum_{i,i'} X_{i,i'} g_i(x) g_{i'}(x),
\end{equation}
where $X$ is a positive semidefinite matrix. These are integrable functions satisfying conditions (i) and (ii). Since $\nu_n(g)$ and $g(0)$ are linear in the entries of $X$, this reduces the optimization problem to the semidefinite program
\begin{mini}
  {}{\nu_n(g)}{}{}
  \label{pr:origsdp}
  \addConstraint{g(0)}{= 1}{}
  \addConstraint{X}{\succeq 0}{}.
\end{mini}
Here $X \succeq 0$ means $X$ is a positive semidefinite matrix of appropriate size. Since we optimize a linear functional over positive semidefinite matrices with affine constraints, this is a semidefinite program, which can be solved efficiently in practice. 

We have that
\[
\sum_{\substack{j\geq  1 \\ \gamma_j \le T, \, m_{\rho_j} \geq n}} 1  \leq \frac{M_n(T)}{(n-1)!},
\]
and therefore
\[
\frac{Z_n(T)}{N(T)} \geq 1 -  \frac{M_n(T)}{N(T)(n-1)!}.
\]
Thus, 
\[
\liminf_{T\to\infty} \frac{Z_n(T)}{N(T)} \geq 1-\frac{c_{n,m}}{(n-1)!},
\]
which shows we can use the problem in \eqref{pr:origsdp} to obtain a lower bound on the fraction of zeros having multiplicity at most $n-1$.

As an alternative to $M_n(T)$ we could also use the parameters 
\[
N_k(T) = \sum_{\substack{j\geq 1 \\ \ga_j\leq T}} m_{\rho_j}^{k-1},
\]
which satisfy $N(T) = N_1(T)$ and $N^*(T) = N_2(T)$. We have
\begin{equation}\label{eq:striling}
N_n(T) = \sum_{k=1}^n S(n, k) M_k(T),
\end{equation}
where $S(n,k)$ are the Stirling numbers of the second kind. Since $S(n,k) \ge 0$ for all $n$ and $k$, we can use our bounds on $c_{n,m}$, computed Section~\ref{sec:fourieropt} and \ref{sec:setup}, to obtain
\begin{equation}\label{eq:NnT}
\limsup_{T\to\infty} \frac{N_n(T)}{N(T)} \le \begin{cases} 
2.0597 &\text{ if $n=3$ and $m=1$,}\\ 
5.8984 &\text{ if $n=3$ and $m=2$,}\\ 
3.8834 &\text{ if $n=4$ and $m=1$,}
\end{cases}
\end{equation}provided the hypothesis of Theorem~\ref{main} hold. We also adapted the problem in \eqref{pr:origsdp} to bound $N_n(T)$ directly, but we were not able to improve over the results in \eqref{eq:NnT} in this way.

We end this section with a possible formal solution to the problem of computing $c_{n,m}$. Note that the following result is true for $H_{n-1}$ replaced by any compact set with a nonempty interior, with essentially the same proof.

\begin{theorem}\label{thmkernel}
For $n,m > 0$ there exists a kernel $K \colon \cp^{n-1}\times \cp^{n-1}\to\cp$ such that 
\begin{equation}\label{eq:kernel}
g(w)=\int_{\r^{n-1}} g(v)\overline{K(w,v)}\, d\nu_n(v)
\end{equation}
for all $g\in \mathrm{PW}(\frac1m H_{n-1})$ and $w\in \cp^{n-1}$.
We have
\[
c_{n,m} \leq \frac{1}{K(0,0)},
\] 
and equality is attained if every nonnegative $g\in L^1(\r^n)$ with $\supp \ft g \subseteq \frac2m H_{n-1}$ is a sum of squares in the sense that there are $g_{j,N}\in \mathrm{PW}(\tfrac1m  H_{n-1})$ such that
\begin{equation}\label{eq:sumofsquareslimit}
g(x)= \lim_{N\to\infty} \sum_{j=1}^{N}|g_{j,N}(x)|^2,
\end{equation}
with almost everywhere convergence.
\end{theorem}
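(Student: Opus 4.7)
The plan is to view $1/K(0, 0)$ as the reciprocal of the norm-squared of the evaluation functional at $0$ on a weighted Paley--Wiener Hilbert space, and then to exhibit a matching extremizer. First I would equip $\mathrm{PW}(\tfrac{1}{m} H_{n-1})$ with the sesquilinear form
\[
\langle h_1, h_2 \rangle := \int_{\r^{n-1}} h_1(x)\,\overline{h_2(x)}\,W_n(x, 0)\,dx.
\]
Positive semi-definiteness follows from the reduction already performed in this section: if $h$ lies in $\mathrm{PW}(\tfrac{1}{m} H_{n-1}^\circ)$, then the lift $F(x_1, \ldots, x_n) := |h(x_1 - x_n, \ldots, x_{n-1} - x_n)|^2$ is nonnegative, translation-invariant, and has Fourier support in $\{|\xi_1| + \cdots + |\xi_n| < 2/m\}$ (by the subadditivity built into $H_{n-1}$). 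Hence $\langle h, h \rangle = \int F\,W_n\,\delta(\tfrac{1}{n}\sum_i x_i)\,dx \geq 0$ by applying \eqref{rudsarresult} to $F$, since the underlying correlation sum is nonnegative; a density argument extends the inequality to the closure. Quotienting null vectors and completing produces a Hilbert space $\H$ inside $\mathrm{PW}(\tfrac{1}{m} H_{n-1})$. Showing that the point-evaluation functional $g \mapsto g(w)$ is bounded on $\H$ for every $w \in \cp^{n-1}$ then allows the Riesz representation theorem to furnish the reproducing kernel $K(w, \cdot) \in \H$ satisfying \eqref{eq:kernel}.

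For the upper bound, I would set $h(x) := K(x, 0)/K(0, 0)$ and $g := |h|^2$. Since $\ft h$ has closed support in $\tfrac{1}{m} H_{n-1}$, the test function $g$ satisfies $g \in L^1(\r^{n-1})$, $g \geq 0$, $g(0) = 1$, and $\overline{\supp}(\ft g) \subseteq \tfrac{2}{m} H_{n-1}$, so it is admissible in the infimum defining $c_{n,m}$. Hermitian symmetry of $K$ and the reproducing property give $\|K(\cdot, 0)\|_{\H}^2 = K(0, 0)$, whence
\[
\nu_n(g) = \|h\|_{\H}^2 = \frac{\|K(\cdot, 0)\|_{\H}^2}{K(0, 0)^2} = \frac{1}{K(0, 0)},
\]
and therefore $c_{n,m} \leq \nu_n(g)/g(0) = 1/K(0, 0)$.

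For the matching lower bound under \eqref{eq:sumofsquareslimit}, I would take any admissible nonnegative $g$ and use its decomposition $g = \lim_N \sum_j |g_{j, N}|^2$. Nonnegativity of each summand licenses the interchange of limit, sum and integral via monotone convergence, giving $\nu_n(g) = \lim_N \sum_j \|g_{j, N}\|_{\H}^2$; the reproducing-kernel inequality $|g_{j, N}(0)|^2 \leq K(0, 0)\,\|g_{j, N}\|_{\H}^2$, summed and passed to the limit together with $g(0) = \lim_N \sum_j |g_{j, N}(0)|^2$, yields $g(0) \leq K(0, 0)\,\nu_n(g)$. Taking the infimum over $g$ gives $c_{n,m} \geq 1/K(0, 0)$, matching the upper bound.

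The main obstacle is the boundedness of point evaluation in the weighted norm $\|\cdot\|_{\H}$, since $W_n(\cdot, 0)$ is neither bounded below nor rapidly decaying and Paley--Wiener functions need not decay on the boundary of their support. A natural route is to use the Andr\'eief/Cauchy--Binet identity
\[
W_n(x) = \frac{1}{n!}\int_{[-1/2, 1/2]^n}\bigl|\det(e^{2\pi i\xi_k x_i})_{i, k = 1}^n\bigr|^2\,d\xi,
\]
which exhibits $W_n(\cdot, 0)$ as a nonnegative superposition of rank-one positive kernels and reduces the construction of $K$ (and the needed estimates) to standard reproducing-kernel theory for Paley--Wiener spaces with positive weight.
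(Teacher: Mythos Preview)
Your treatment of the upper bound via $g = |K(\cdot,0)|^2/K(0,0)^2$ and of the lower bound via the reproducing inequality $|h(0)|^2 \le K(0,0)\,\nu_n(|h|^2)$ is the same as the paper's, and is fine.  There is, however, a real gap in the existence of the kernel itself, which you correctly flag as ``the main obstacle'' but do not actually close.

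Quotienting null vectors and completing abstractly does not by itself give a reproducing-kernel Hilbert space of functions on $\cp^{n-1}$; one must know that point evaluation is continuous \emph{before} completing, and for that one needs $\nu_n(|g|^2)$ to control $\|g\|_2^2$ from below on $\mathrm{PW}(\tfrac1m H_{n-1})$.  The Andr\'eief identity you propose writes $W_n(\cdot,0)$ as a nonnegative superposition of weights $|\det(e^{2\pi i \xi_k x_i})|^2$, but each of these vanishes on the same diagonals as $W_n$ does, so the superposition does not improve the situation: you are still left with a weight that is zero on a codimension-one set, and ``standard reproducing-kernel theory for Paley--Wiener spaces with positive weight'' does not apply.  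The paper instead proves the crucial two-sided estimate $\nu_n(|g|^2)\asymp \|g\|_2^2$ directly: since $W_n\le 1$ one has the upper bound, and for the lower bound one observes that $W_n(x,0)\ge c(\epsilon)>0$ on the well-separated set $S_\epsilon=\{|x_i|>\epsilon,\ |x_i-x_j|>\epsilon\}$, and then shows that for band-limited $g$ the $L^2$-mass on the thin tubes $\{|x_i-x_j|<\epsilon\}$ is $O(\epsilon^{1/2})\|g\|_2^2$ (via Hardy--Littlewood--Sobolev fractional integration in one variable, or alternatively by Logvinenko--Sereda).  Once the norms are equivalent, $\mathrm{PW}(\tfrac1m H_{n-1})$ is already complete in the $\nu_n$-norm and the usual Fourier-inversion/Cauchy--Schwarz argument gives continuity of point evaluation.

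Two smaller points.  Positivity of the form needs no appeal to \eqref{rudsarresult}: $W_n$ is the determinant of a Gram matrix for the positive-definite $\mathrm{sinc}$ kernel, hence $W_n\ge 0$ pointwise.  And in the lower-bound step you use $g(0)=\lim_N\sum_j|g_{j,N}(0)|^2$, but the hypothesis only gives a.e.\ convergence; the paper upgrades this to pointwise (indeed locally uniform) convergence by first passing through $L^1$ via monotone convergence and then using that $\mathrm{PW}(\tfrac1m H_{n-1})\cap L^1$ is a reproducing-kernel space.
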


\begin{proof}
We have that $0\leq W_n(x) \leq 1$ for all $x$, and $W_n(x)=0$ if and only if $x_i=x_j$ for some $i\neq j$. Using induction it can be shown that for every $\epsilon>0$ there exists $c(\epsilon) >0$ such that $W_n(x)\geq c(\epsilon)$ whenever $|x_i-x_j|\geq \epsilon$ for all $i\neq j$. Define
\[
S_\epsilon = \{x \in \r^{n-1} : |x_i|>\epsilon  \text{ and } |x_i - x_j | > \epsilon \text{ for all distinct } i,j=1,\ldots,n-1\}.
\]
Then $\nu_n(|g|^2) \leq \|g\|_2^2$ and 
\[
\nu_n(|g|^2) \geq c(\epsilon) \int_{S_\epsilon} |g(x)|^2 \, dx.
\]
Observe that the set $S_\ep$ is relatively dense, that is, there is a cube $Q$ (e.g., $Q=[-3\ep,3\ep]^{n-1}$) and $\gamma>0$ such that 
\[
\inf_{x\in \r^{b-1}} \text{Vol}(S_\ep \cap (Q+x)) \geq \gamma  \text{Vol}(Q).
\]
We could then apply the Logvinenko-Sereda Theorem \cite[p. 112]{LS}, which says there exists $b(\ep)>0$ such that 
$$
\int_{S_\epsilon} |g(x)|^2 \, dx \geq b(\ep) \|g\|_2^2,
$$
proving the norms $\nu_n(|g|^2)$ and $ \|g\|_2^2$ are equivalent. For completeness, we give a direct proof of the above inequality in our particular case.

We apply the Hardy-Littlewood-Sobolev theorem of fractional integration (see, e.g., \cite[Section V.1.2]{MR0290095}), which implies that there exists a constant $C > 0$ such that
\[
\|h(\cdot) |\cdot |^{-1/4} \|_{2} \le C \| \ft h \|_{4/3}
\]
for all $h \in L^2(\r)$. If in addition $\ov{\supp}(\ft h) \subseteq [-a,a]$, then using H\"older's inequality we obtain
\[
\|h(\cdot)|\cdot |^{-1/4} \|_{2} \le C\| \ft h \|_{4/3} \leq (2a)^{1/4} C \|\ft h \|_2 = (2a)^{1/4} C \| h \|_{2}.
\]
 Then, with $h(x_i) = \wt g(x + x_j e_i)$ and $a=1/2$, we get 
\[
\int_{\{ x : |x_i-x_j| < \epsilon\}} |g(x)|^2 \, dx \leq \epsilon^{1/2} \int_{\{ x : |x_i-x_j| < \epsilon\}} \left|\frac{g(x)}{|x_i-x_j|^{1/4}}\right|^2 \, dx \le C \epsilon^{1/2}  \|g\|_2^2.
\]
A similar procedure shows that  $\int_{\{ x : |x_i| < \epsilon\}} |g(x)|^2 \, dx \leq C \ep^{1/2} \|g\|_2^2$.
Hence, 
\begin{align*}
\int_{S_\epsilon} |g(x)|^2 \, dx 
&\geq \|g\|_2^2 - \sum_{i=1}^{n-1} \int_{\{ x : |x_i| < \epsilon\}} |g(x)|^2 \, dx - \sum_{1 \le i < j \le n} \int_{\{ x : |x_i-x_j| < \epsilon\}} |g(x)|^2 \, dx\\
&= \left(1 - \binom{n}{2} C \epsilon^{1/2}\right) \|g\|_2^2.
\end{align*}
By choosing $\epsilon > 0$ small enough the constant $1 - \binom{n}{2} C \epsilon^{1/2}$ is positive, which shows $\nu_n$ defines a norm equivalent to the $L^2$ norm. Hence, $\mathrm{PW}(\tfrac1m H_{n-1})$ is a Hilbert space with the inner product given by $\nu_n$.

Using Fourier inversion, Cauchy's inequality, and Plancherel's theorem it can be shown that the evaluation functions $f \mapsto f(x)$ on $\mathrm{PW}(\frac1m H_{n-1})$ are continuous in the $L^2$ topology, so by the Riesz representation theorem there exists a kernel $K \colon \mathbb{C}^{n-1} \times \mathbb{C}^{n-1} \to \mathbb{C}$ with $K(x,\cdot) \in L^2(\mathbb C^{n-1})$ satisfying \eqref{eq:kernel}.

To finish, observe that if $g$ satisfies conditions (i), (ii), and (iii) and condition \eqref{eq:sumofsquareslimit}, then by the monotone convergence theorem, limit \eqref{eq:sumofsquareslimit} holds also in the $L^1(\r^{n-1})$-sense, and since $\mathrm{PW}(\tfrac1m H_{n-1}) \cap L^1(\r^{n-1})$ is a reproducing kernel space, it is simple to show that we actually have uniform convergence in compact sets. Then,
\begin{align*}
1=g(0)=\lim_{N\to\infty} \sum_{j=1}^{N}|g_{j,N}(0)|^2 & = \lim_{N\to\infty} \sum_{j=1}^{N} |\nu_n(g_{j,N}(\cdot)K(0,\cdot))|^2 \\
& \leq  \lim_{N\to\infty} \sum_{j=1}^{N} \nu_n(|g_{j,N}|^2)K(0,0) = \nu_n(g)K(0,0).
\end{align*}
This shows $c_{n,m}\geq 1/K(0,0)$.  Since $g(x)=K(0,x)^2/K(0,0)^2$ satisfies conditions (i), (ii), and (iii) and $\nu_n(g) = 1/K(0,0)$, we have $c_{n,m} \leq 1/K(0,0)$, which completes the proof.
\end{proof}

It is worth mentioning that condition \eqref{eq:sumofsquareslimit} holds true, and so equality $c_{n,m}=K(0,0)^{-1}$ is true, in the one-dimensional case (Hadamard factorization), which implies this condition is true in higher dimensions if $H_{n-1}$ is replaced by a cube.  Condition \eqref{eq:sumofsquareslimit} also holds true for a ball (see \cite{Efimov}) and radial $g$, which is enough to show $c_{n,m}=K(0,0)^{-1}$. It is an open problem to prove condition \eqref{eq:sumofsquareslimit} when $H_{n-1}$ is replaced by a generic convex body.  J. Vaaler, via personal communication, proposed an interesting way of obtaining \eqref{eq:sumofsquareslimit}. He conjectures the following: For any centrally symmetric convex body $K$ there is a constant $c(K)\in(0,1)$ such that for any nonnegative $g\in {\rm PW}(K) \cap L^1$ there is $h\in {\rm PW}(K)$ such that 
\[
g\geq |h|^2 \quad \text{and} \quad \|h\|_{2}^2 \geq c(K) \|g\|_1.
\]
If this conjecture is true, iterating this procedure ones finds functions $h_1,h_2,\ldots$ such that $g\geq |h_1|^2+\ldots+|h_N|^2$ and
\[
\|h_N\|_{2}^2 \geq c(K)( \|g\|_1 - \|h_1\|_{2}^2-\ldots-\|h_{N-1}\|_{2}^2).
\]
Then it is easy to see that we must have $g=\sum_{n\geq 1} |h_n|^2$ pointwise.

\section{Analytic solution for \texorpdfstring{$n=2$}{n=2}}
\label{sec:nequals2}

In this section, we compute the kernel $K$ of Theorem \ref{thmkernel} for $n=2$ and all $m\geq 1$. We note that this kernel was already computed in \cite{CCLM} but only for $m=1$, and it relied heavily on the theory of de Branges spaces and Fourier transform techniques. In what follows, we present a direct and simple approach that avoids any such things. Moreover, if one wants to compute only $K(0,0)$, then the argument is really short.

We want to compute the reproducing kernel $K(w,z)$ such that
$$
f(w)=\int_\r f(x)\ov{K(w,x)}(1-s(x)^2) \, dx
$$
for every $w\in\cp$ and $f\in \mathrm{PW}([-\tfrac1{2m},\frac1{2m}])$, where $s(x)=\tfrac{\sin(\pi x)}{\pi x}$. Letting $k(w,z)=mK(mw,mz)$ we obtain
$$
f(w)=\int_\r f(x)\ov{k(w,x)}(1-s(m x)^2) \, dx =: \langle f | k(w,\cdot) \rangle
$$
for every $w\in\cp$ and $f\in \mathrm{PW}([-\tfrac12,\tfrac1{2}])$. In what follows we use the trigonometric identity $2\sin^2(\theta)=1-\cos(2\theta)$. Observe that if $g\in \mathrm{PW}([-1,1])$ is even then
\begin{align*}
\int_\r g(x)s(m x)^2dx  &= \int_\r \frac{g(x)-g(0)}{2(\pi m x)^2}(1-\cos(2\pi m x))dx + \frac1m g(0) \\
& = \int_\r \frac{g(x)-g(0)}{2(\pi m x)^2}dx + \frac1m g(0).
\end{align*}
Since $1-s(x)^2$ is even, a simple computation shows $k(-w,z)=k(w,-z)$, and so $k(0,x)$ is even. Note also that any reproducing kernel satisfies $k(w,z)=\ov{k(z,w)}=k(\ov z, \ov w)$. For simplicity we let
$$
k_\pm(w,z)=\frac12(k(w,z)\pm k(-w,z))
\ \ \text{ and } \ \ 
s_\pm(w,z)=\frac12(s(w-z)\pm s(w+z)).
$$
For $x,y\in\r$ we obtain
\begin{align*}
s_+(x,y)  & = \langle s_+(x,\cdot) | k_+(y,\cdot) \rangle  \\  
&= k_+(x,y) - \frac1m s(x)k(0,y)- \int_\r \frac{s_+(x,t) {k_+(t,y)}-s(x) {k(0,y)}}{2(\pi m t)^2}dt\\
& = k_+(x,y)-\frac1m s_+(x,0)k(0,y) - \int_\r s_+(x,t) \frac{ k_+(t,y)-k(0,y)}{2(\pi m t)^2}dt \\ & \quad - k(0,y)\int_\r \frac{s_+(x,t) -s(x)}{2(\pi m t)^2}dt \\
& =  k_+(x,y)-\frac1m s(x)k(0,y)  -  \frac{ k_+(x,y)-k(0,y)}{2(\pi m x)^2} \\
&\quad   - k(0,y)\frac{1-\pi x \sin(\pi x)-\cos(\pi x)}{2(\pi m x)^2} \\
& =  k_+(x,y)\bigg(1-\frac1{2(\pi m x)^2}\bigg)+k(0,y)\bigg(-\frac1m s(x)  + \frac{\pi x \sin(\pi x)+\cos(\pi x)}{2(\pi m x)^2}\bigg),
\end{align*}
where we used the identity 
$$
\int_\r \frac{s_+(x,t) -s(x)}{2(\pi m t)^2}dt = \frac{1-\pi x \sin(\pi x)-\cos(\pi x)}{2(\pi m x)^2}
$$
which can be proven via the residue theorem.
We now evaluate at $x=u_m$ with $u_m=(\pi m \sqrt{2})^{-1}$ to get
$$
k(0,y)=\frac{s_+(u_m,y)}{-\sqrt{2} \sin(\pi u_m)  + {\pi u_m \sin(\pi u_m)+\cos(\pi u_m)}}
$$
which gives
$$
K(0,0)=\frac{k(0,0)}{m}=\frac1{\frac{1}{\sqrt{2}}\cot(\tfrac1{\sqrt{2}m})-\frac{2m-1}{2m}}.
$$

We note that from this point on one can easily extract $k_{+}(x,y)$ by back substitution, if we let $a(x)=-s(x)/m   + \frac{\pi x \sin(\pi x)+\cos(\pi x)}{2(\pi m x)^2}$ then
$$
k_{+}(x,y) = \frac{s_+(x,y)-s_+(u_m,y)a(x)/a(u_m)}{1-\frac1{2(\pi m x)^2}}.
$$
For $k_{-}(x,y)$, one can do a similar trick to extract $\partial_t k(y,t)|_{t=0}$. We have
\begin{align*}
s_-(x,y)&=k_-(x,y)-\int_\r \frac{s_-(x,t)k_-(y,t)}{2(\pi m t)^2}dt \\
& = k_-(x,y)-\frac1{x}\int_\r \frac{(s_+(x,t)-\cos(\pi x)s(t))k_-(y,t)/t}{2(\pi m)^2}dt \\
& = k_-(x,y)\bigg(1-\frac1{2(\pi m x)^2}\bigg)+\frac{\cos(\pi x)}{2(\pi m)^2 x}\partial_t k(y,t)|_{t=0},
\end{align*}
where we used that $s_-(x,t)/t =s_+(x,t)-\cos(\pi x)s(t)$, hence evaluation at $x=u_m$ gives
$$
\partial_t k(y,t)|_{t=0} = \frac{  s_-(u_m,y)}{u_m\cos(\pi u_m)}.
$$
Letting $b(x)=\frac{\cos(\pi x)}{2(\pi m)^2 x}$ we get
$$
k_-(x,y) = \frac{s_-(x,y)-s_-(u_m,y)b(x)/b(u_m)}{1-\frac1{2(\pi m x)^2}}
$$
and
$$
K(x,y)= \frac1m(k_+(x/m,y/m) + k_-(x/m,y/m)).
$$

\section{Symmetry and rank-1 structure}
\label{sec:symrankone}

In this section, we show how we can use symmetry and a rank-$1$ structure to solve the problem in \eqref{pr:origsdp} efficiently.

Let $G_n$ be the symmetry group of $H_{n-1}$; that is, let $G_n$ be the group containing the matrices $A \in \r^{(n-1) \times (n-1)}$ with $A H_{n-1} = H_{n-1}$. Let 
\[
\Gamma_n = \{A^{\sf T} : A \in G_n\}.
\]

If $g$ is a function satisfying conditions (i), (ii), and (iii), then, for $\gamma \in \Gamma_n$, the function $L(\gamma) g$ defined by $L(\gamma) g(x) = g(\gamma^{-1} x)$, also satisfies these constraints and $\nu_n(g) = \nu_n(L(\gamma) g)$. It follows that the function $\overline g$ defined by
\[
\overline g(x) = \frac{1}{|\Gamma_n|} \sum_{\gamma\in\Gamma_n} L(\gamma)g(x)
\]
also satisfies these constraints and $\nu_n(\overline g) = \nu_n(g)$. Since $\overline g$ is $\Gamma_n$-invariant, this shows we may restrict to $\Gamma_n$-invariant functions.

Now assume $F$ is some $\Gamma_n$-invariant space of functions $\r^{n-1} \to \r$. Consider a complete set $\widehat \Gamma_n$ of irreducible, unitary representations of $\Gamma_n$. Let $\{g_{\pi,i,j}\}$ be a symmetry adapted system of the space $F$. By this we mean that $\{g_{\pi,i,j}\}$ is a basis of $F$ such that $H_{\pi,i} := \mathrm{span}\{g_{\pi,i,j} : j=1,\ldots,d_\pi\}$ is a $\Gamma_n$-irreducible representation with $H_{\pi,i}$ equivalent to $H_{\pi',i'}$ if and only if $\pi=\pi'$, and for each $\pi$, $i$ and $i'$ there exists a $\Gamma_n$-equivariant isomorphism $T_{\pi,i,i'} \colon H_{\pi,i} \to H_{\pi,i'}$ such that $g_{\pi,i',j} = T g_{\pi,i,j}$ for all $j$. In other words, a symmetry adapted system is a basis of $F$ according to a decomposition of $F$ into $\Gamma_n$-irreducible subspaces, where the bases of equivalent irreducibles transform in the same way under the action of $\Gamma_n$.

Then, assuming the representations of $\Gamma_n$ are of real type, we have the following block-diagonalization result: any function $g$ that is a sum of squares of functions from $F$ can be written as
\[
g(x) = \sum_\pi \sum_{i,i'} X^{(\pi)}_{i,i'} \sum_j g_{\pi,i,j}(x) g_{\pi,i',j}(x),
\]
for positive semidefinite matrices $X^{(\pi)}$. This follows from Schur's lemma as explained in \cite{MR2067190} (see \cite[Proposition~4.1]{clusteredlowranksolver} for a detailed proof). 

To generate a concrete symmetry-adapted system $\{g_{\pi,i,j}\}$ for $F$ we use an implementation of the projection algorithm as described in \cite{serre_representations_1977}. For this, first define the operators
\begin{equation}\label{eq:symmetrization}
p_{j,j'}^{(\pi)} = \frac{d_\pi}{|\Gamma|} \sum_{\gamma\in\Gamma} \pi(\gamma^{-1})_{j,j'} L(\gamma), 
\end{equation}
where $d_\pi$ is the dimension of $\pi$ and $L(\gamma)$ is the operator $L(\gamma) g(x) = g(\gamma^{-1} x)$. Then choose bases $\{g_{\pi,i,1}\}_i$ of $\mathrm{Im}(p^{(\pi)}_{1,1})$ and set $g_{\pi,i,j} = p_{j,1}^{(\pi)} g_{\pi,i,1}$.

\begin{lemma}
If $\pi$ is not the trivial representation, then $g_{\pi,i,j}(0) = 0$.
\end{lemma}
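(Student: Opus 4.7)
The plan is to exploit that every $\gamma \in \Gamma_n$ acts linearly on $\r^{n-1}$, so the origin is a fixed point of the whole group. Consequently $L(\gamma) h(0) = h(\gamma^{-1}\cdot 0) = h(0)$ for every function $h$ and every $\gamma$. Applying this termwise in the definition \eqref{eq:symmetrization} yields, for any $h \in F$,
\[
(p^{(\pi)}_{j,j'} h)(0) \;=\; \frac{d_\pi}{|\Gamma|}\sum_{\gamma \in \Gamma_n}\pi(\gamma^{-1})_{j,j'}\, h(0) \;=\; h(0)\cdot\frac{d_\pi}{|\Gamma|}\sum_{\gamma \in \Gamma_n}\pi(\gamma^{-1})_{j,j'}.
\]

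The main input is then the standard fact that $M := \sum_{\gamma} \pi(\gamma) = 0$ whenever $\pi$ is a nontrivial irreducible representation of a finite group. I would derive this in one line: for any $\gamma_0 \in \Gamma_n$, $\pi(\gamma_0) M = \sum_\gamma \pi(\gamma_0 \gamma) = M$, so the image of $M$ is a $\Gamma_n$-invariant subspace; by Schur/irreducibility it is either $0$ or all of the representation space, but the latter forces $\pi(\gamma_0) = \mathrm{Id}$ for every $\gamma_0$, contradicting nontriviality. Substituting $\gamma \mapsto \gamma^{-1}$ shows $\sum_\gamma \pi(\gamma^{-1}) = 0$ as well, so every matrix entry of this sum vanishes.

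Combining these observations, $(p^{(\pi)}_{j,1} h)(0) = 0$ for every $h \in F$ whenever $\pi$ is nontrivial. Since the lemma's functions satisfy $g_{\pi,i,1} \in \mathrm{Im}(p^{(\pi)}_{1,1})$ and $g_{\pi,i,j} = p^{(\pi)}_{j,1} g_{\pi,i,1}$, both constructions are images of the operators $p^{(\pi)}_{j',j''}$ applied to elements of $F$, so they vanish at $0$. This completes the argument.

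There is no real obstacle here; the only thing to be mildly careful about is that the lemma concerns all indices $j$ (including $j=1$), so one must note that $g_{\pi,i,1}(0) = 0$ separately by the same calculation applied to $p^{(\pi)}_{1,1}$, rather than viewing it as a trivial case of $g_{\pi,i,j} = p^{(\pi)}_{j,1} g_{\pi,i,1}$.
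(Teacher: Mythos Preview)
Your argument is correct and follows essentially the same route as the paper's proof: both evaluate $p^{(\pi)}_{j,j'}h$ at the origin, use that $0$ is fixed by every $\gamma$ to factor out $h(0)$, and then invoke the vanishing of $\sum_{\gamma}\pi(\gamma^{-1})_{j,j'}$ for nontrivial irreducible $\pi$. The only cosmetic differences are that the paper writes $g_{\pi,i,j}$ as $p^{(\pi)}_{j,1}p^{(\pi)}_{1,1}g$ and so computes a double sum, and it cites orthogonality of matrix coefficients rather than deriving the vanishing of $\sum_\gamma \pi(\gamma)$ from Schur's lemma as you do.
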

\begin{proof}
Let $p_{j,j'}^{(\pi)}$ be the operator as defined in \eqref{eq:symmetrization} and $g \in F$. Then,
\[
(p_{j,1}^{(\pi)} p_{1,1}^{(\pi)} g) (x) = \frac{d_\pi^2}{|\Gamma|^2} \sum_{\beta,\gamma\in\Gamma} \pi(\beta^{-1})_{j,1} \pi(\gamma^{-1})_{1,1} g(\beta^{-1} \gamma^{-1} x),
\]
and thus
\[
(p_{j,1}^{(\pi)} p_{1,1}^{(\pi)} g) (0) = \frac{d_\pi^2}{|\Gamma|^2} \left(\sum_{\beta\in\Gamma} \pi(\beta^{-1})_{j,1} \right)
\left(\sum_{\gamma\in\Gamma} 
\pi(\gamma^{-1})_{1,1}\right) g(0).
\]
If $\pi$ is not the trivial representation, then $\sum_{\gamma\in\Gamma} \pi(\gamma^{-1})_{j,j'} = 0$ by orthogonality of matrix coefficients. By linearity, the result then follows.
\end{proof}

This shows optimizing over functions of the form \eqref{eq:sdpform} is the same as optimizing over functions of the form
\[
g(x) = \sum_{i,i'} X_{i,i'} g_{1,i,1}(x) g_{1,i',1}(x),
\]
where $X$ is positive semidefinite and where we denote the trivial representation by $1$. That is, instead of using a $\Gamma$-invariant space of functions, we can use the much smaller space of $\Gamma$-invariant functions $g_i(x) = g_{1,i,1}(x)$. In other words, the problem in \eqref{pr:origsdp} reduces to the simpler semidefinite program
\begin{mini}
  {}{\langle A, X \rangle}{}{}
  \label{pr:sdp}
  \addConstraint{\langle b b^{\sf T}, X \rangle}{= 1}{}
  \addConstraint{X}{\succeq 0}{},
\end{mini}
where $A_{i, i'} = \nu_n(g_i g_{i'})$ and $b_i = g_{i}(0)$. Here we use the notation $\langle A, B \rangle = \mathrm{tr}(A^{\sf T} B)$.

A semidefinite program with $m$ equality constraints has an optimal solution of rank $r$ with $r(r+1)/2 \leq m$ \cite{Pataki1998OnTR}. Since the above semidefinite program has only one equality constraint, we know that there exists an optimal solution of rank $1$. In fact, the following lemma shows we can find this rank $1$ solution by solving a linear system. We can view this lemma as a finite-dimensional version of Theorem~\ref{thmkernel}.

\begin{lemma}\label{lem:simplesdp}
Let $A$ be a positive semidefinite matrix and let $x$ be a solution to the system $Ax=b$. Then  $x x^{\sf T}/(x^{\sf T} b)^2$ is an optimal solution to the semidefinite program
\begin{mini*}
  {}{\langle A, X \rangle}{}{}
  \addConstraint{\langle b b^{\sf T}, X \rangle}{= 1}{}
  \addConstraint{X}{\succeq 0}{}.
\end{mini*}
\end{lemma}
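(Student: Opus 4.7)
The plan is to verify feasibility, compute the objective value, and then match it with a lower bound coming from weak SDP duality (or equivalently from Cauchy--Schwarz for the semi-inner product induced by $A$).

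First I would check that $X^* := xx^{\sf T}/(x^{\sf T} b)^2$ is feasible. It is clearly positive semidefinite as a rank-$1$ outer product (assuming $x^{\sf T} b \neq 0$, which I address below). Using $Ax=b$, the normalization constraint evaluates to
\[
\langle bb^{\sf T}, X^* \rangle = \frac{(b^{\sf T} x)^2}{(x^{\sf T} b)^2} = 1,
\]
and the objective value at $X^*$ is
\[
\langle A, X^* \rangle = \frac{x^{\sf T} A x}{(x^{\sf T} b)^2} = \frac{x^{\sf T} b}{(x^{\sf T} b)^2} = \frac{1}{x^{\sf T} b}.
\]

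Next I would prove this value is optimal by exhibiting a matching dual certificate. The Lagrangian dual of the program is $\max\{ y : A - y\, bb^{\sf T}\succeq 0\}$, so it suffices to show $A \succeq (x^{\sf T} b)^{-1} bb^{\sf T}$. For any vector $z$, apply Cauchy--Schwarz to the positive semidefinite bilinear form $\langle u,v\rangle_A := u^{\sf T} A v$:
\[
(z^{\sf T} A x)^2 \le (z^{\sf T} A z)(x^{\sf T} A x).
\]
Substituting $Ax=b$ on the left and $x^{\sf T} A x = x^{\sf T} b$ on the right gives $(z^{\sf T} b)^2 \le (x^{\sf T} b)\, z^{\sf T} A z$, which is exactly the required inequality. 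By weak duality any feasible $X$ satisfies $\langle A, X\rangle \geq (x^{\sf T} b)^{-1}$, so $X^*$ is optimal.

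The only subtlety is handling $x^{\sf T} b$. Since $A \succeq 0$ and $Ax = b$, we have $x^{\sf T} b = x^{\sf T} A x \geq 0$, with equality if and only if $Ax=0$, i.e.\ $b=0$; but $b=0$ makes the constraint $\langle bb^{\sf T}, X\rangle =1$ infeasible, so we may assume $x^{\sf T} b > 0$ and $X^*$ is well defined. This is the main (and essentially only) point requiring care; the Cauchy--Schwarz step is routine once one notes that $A$ defines a genuine semi-inner product.
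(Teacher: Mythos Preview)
Your proof is correct and rests on the same key step as the paper's---Cauchy--Schwarz for the semi-inner product $u^{\sf T} A v$ together with $Ax=b$---though you package it as a dual certificate $A \succeq (x^{\sf T} b)^{-1} bb^{\sf T}$ while the paper applies the same inequality directly to the spectral components $v_i$ of a feasible $X$. Your explicit verification that $x^{\sf T} b > 0$ (hence $X^*$ is well defined) is a small point the paper leaves implicit.
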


\begin{proof}
Let $X$ be feasible to the semidefinite program and consider the spectral decomposition $X = \sum_i v_i v_i^{\sf T}$.
Using Cauchy-Schwarz we have
\[
1 = b^{\sf T} X b = \sum_i (b^{\sf T} v_i)^2 = \sum_i (x^{\sf T} A v_i)^2 \leq \sum_i (x^{\sf T} A x)(v_i^{\sf T} A v_i) = 
x^{\sf T} b \langle A, X\rangle,
\]
so 
\[
\langle A, X\rangle \geq \frac{1}{x^{\sf T} b} = \left\langle A, \frac{x x^{\sf T}}{(x^{\sf T} b)^2}\right\rangle.\qedhere 
\]
\end{proof}

\section{Parametrization using polynomials}
\label{sec:fourieropt}

Here we consider the case $(n,m)=(3,1)$. Let $\rchi_{H_2}(x)$ be the indicator function of the region $H_2$ and define the functions $g_\alpha(x)$ for $\alpha = (\alpha_1,\alpha_2) \in \mathbb N_0^2$ by
\[
\widehat g_\alpha(x) = x_1^{\alpha_1} x_2^{\alpha_2} \rchi_{H_2}(x).
\]
Fix a degree $d$ and let $F = \mathrm{span}\{g_\alpha : \alpha_1 + \alpha_2 \leq d\}$. As before, we let $\{g_i\}$ be a basis for the $\Gamma_n$-invariant functions in $F$, and we use the parametrization
\[
g(x) = \sum_{i,i'} X_{i,i'} g_{i}(x) g_{i'}(x).
\]
Since each function $g_i$ is a linear combination of the functions in $F$ we have that $\widehat g_i(x) = p_i(x) \chi_{H_2}(x)$ for some polynomial $p_i$ of degree at most $d$; and as explained in Section~\ref{pr:origsdp} these polynomials $p_i$ can be computed explicitly.

To set up the linear system for solving \eqref{pr:sdp} we need to  compute explicitly how the linear functionals $g(0)$ and $\nu_3(g)$ depend on the entries of the matrix $X$. 

The region $H_2$ is the open hexagon with vertices $(1/2,0)$, $(1/2,-1/2)$, $(0,-1/2)$, $(-1/2,0)$, $(-1/2,1/2)$, $(0,1/2)$. Its symmetry group $G_3$ is the dihedral group of order $12$ with generators 
\[
r = \begin{pmatrix} 0 & -1 \\ 1 & 1\end{pmatrix} \quad \text{and} \quad s = \begin{pmatrix} 0 & 1 \\ 1 & 0\end{pmatrix}.
\]
With $C = [-1/2, 0] \times [0, 1/2]$ we have
$
H_2 = C \cup r^2 C \cup r^4 C$;
see Figure~\ref{fix:hexsquare}.

\begin{figure}
\begin{tikzpicture}[scale=2]
    \fill[gray!40] (-0.5,0) -- (0,0) -- (0,0.5) -- (-0.5,0.5) -- cycle;
    \node at (-0.25, 0.25) {$C$};
    
    \draw[ultra thin, ->] (-0.7,0) -- (0.7,0) node[right] {$x_1$};
    \draw[ultra thin,->] (0,-0.7) -- (0,0.7) node[above] {$x_2$};

    \draw[thick] (0.5,0) -- (0.5,-0.5) -- (0,-0.5) -- (-0.5,0) -- (-0.5,0.5) -- (0,0.5) -- cycle;
\end{tikzpicture}
\caption{The hexagon $H_2$ and the square $C$ (shaded).}
\label{fix:hexsquare}
\end{figure}

Since $g_{i}$ is $\Gamma_3$-invariant, the polynomial $p_{i}(x)$ is $G_3$-invariant, so
\begin{align*}
g_i(0) & = \int_{H_2} p_{i}(x) \, dx = \int_C \big(p_{i}(x) + p_{i}(r^2x) + p_{i}(r^4x)\big) \,dx = 3\int_C p_i(x) \, dx \\
& = 3\int_0^{1/2}\int_{-1/2}^0 p_{i}(x_1, x_2)  \, dx_1 dx_2,
\end{align*}
which can be computed explicitly. This shows how we can write
\[
g(0) = \sum_{i,i'} X_{i,i'} g_{i}(0) g_{i'}(0)
\] 
explicitly as a linear combination of the entries of the matrix $X$.

As computed in Hejhal \cite{hejhal_triple_1994}, we have
\begin{align*}
\nu_3(g) &= \int_{\r^{2}} g(x_1,x_2)W_3(x_1,x_2,0) \, dx_1 dx_2\\
&= 2 + \widehat g(0) + 6 \int_0^1 \widehat g(x,0) (x-1) \, dx - 12 \int_0^1 \int_{-x_2}^0 \widehat g(x_1,x_2)x_2 \, dx_1  dx_2.
\end{align*}

To compute this explicitly, note that 
\[
\widehat g(x) = \sum_{i,i'} X_{i,i'} (\widehat g_{i} * \widehat g_{i'})(x).
\]
The term $\widehat g(0)$ can be computed similarly to how $g_{i}(0)$ is computed above, since
\[
\widehat g(0) = \sum_{i,i'} X_{i,i'} \int_{H_2} p_i(x) p_{i'}(x) \, dx,
\]
where we used that $p_{i'}(-x) = p_{i'}(x)$.

Since $\widehat g_i$ and $\widehat g_{i'}$ have bounded support, the convolution $(\widehat g_i *\widehat g_{i'})(x)$ is an integral over a bounded region whose shape depends on $x$. 
For the third term, symmetry gives
\[
\int_0^1 \widehat g(x,0) (x-1) \, dx = \int_0^1 \widehat g(x, -x) (x-1) \, dx.
\]
Figure~\ref{fig:convolution} shows the shape of the integration region used to compute $\widehat g(x,-x)$ for different values of $x$. This gives
\[
\int_0^1 \widehat g(x,-x)(x-1)\, dx = \sum_{i,i} X_{i,i'}  \int_0^1 \mathcal J_x \,p_{i}(u_1,u_2)p_{i'}(u_1+x,u_2-x) (x-1) \, du_1 du_2 dx,
\]
where
\[
\mathcal J_x = \begin{cases}
\int_{-\frac{1}{2}+x}^0 \int_{-\frac{1}{2}-u_2}^{\frac{1}{2}-x}+ \int_{0}^x \int_{-\frac{1}{2}}^{\frac{1}{2}-x} + \int_{x}^{\frac{1}{2}}\int_{-\frac{1}{2}}^{\frac{1}{2}-u_2} & 0 \le x \le \frac{1}{2}, \\
\int_{x-\frac{1}{2}}^{\frac{1}{2}} \int_{-\frac{1}{2}}^{\frac{1}{2}-x} & \frac{1}{2} \le x \le 1.
\end{cases}
\]

\begin{figure}
    \centering
    \begin{tikzpicture}

    \node (center2) at (0.75,-0.75) {};

    \begin{scope}
        \clip (1,0) -- (0,1) -- (-1,1) -- (-1,0) -- (0,-1) -- (1,-1) -- cycle;
        \clip ($(1,0)+(center2)$) -- ($(0,1)+(center2)$) -- ($(-1,1)+(center2)$) -- ($(-1,0)+(center2)$) -- ($(0,-1)+(center2)$) -- ($(1,-1)+(center2)$) -- cycle;
        \fill[color=gray!40] (-2,-2) rectangle (2,2);
    \end{scope}

    \draw[ultra thin, ->] (-1.4,0) -- (2.2,0) node[right] {$u_1$};
    \draw[ultra thin,->] (0,-2.2) -- (0,1.4) node[above] {$u_2$};
   
        \draw[thick] (1,0) -- (0,1) -- (-1,1) -- (-1,0) -- (0,-1) -- (1,-1) -- cycle;
        \draw[thick] ($(1,0)+(center2)$) -- ($(0,1)+(center2)$) -- ($(-1,1)+(center2)$) -- ($(-1,0)+(center2)$) -- ($(0,-1)+(center2)$) -- ($(1,-1)+(center2)$) -- cycle;
    
        \begin{scope}[shift = {(5,0)}]
        \node (center3) at (1.25,-1.25) {};

        \begin{scope}
            \clip (1,0) -- (0,1) -- (-1,1) -- (-1,0) -- (0,-1) -- (1,-1) -- cycle;
            \clip ($(1,0)+(center3)$) -- ($(0,1)+(center3)$) -- ($(-1,1)+(center3)$) -- ($(-1,0)+(center3)$) -- ($(0,-1)+(center3)$) -- ($(1,-1)+(center3)$) -- cycle;
            \fill[color=gray!40] (-2,-2) rectangle (2,2);
        \end{scope}

        \draw[ultra thin, ->] (-1.4,0) -- (2.4,0) node[right] {$u_1$};
        \draw[ultra thin,->] (0,-2.4) -- (0,1.4) node[above] {$u_2$};

        \draw[thick] (1,0) -- (0,1) -- (-1,1) -- (-1,0) -- (0,-1) -- (1,-1) -- cycle;
        \draw[thick] ($(1,0)+(center3)$) -- ($(0,1)+(center3)$) -- ($(-1,1)+(center3)$) -- ($(-1,0)+(center3)$) -- ($(0,-1)+(center3)$) -- ($(1,-1)+(center3)$) -- cycle;
    \end{scope}
    \end{tikzpicture}
    \caption{The possible shapes of the intersection of the supports of $g_i(u_1, u_2)$ and $g_{i'}(u_1+x, u_2-x)$ for $0 \leq x \leq 1/2$ (left) and $1/2 \leq x \leq 1$ (right).}
    \label{fig:convolution}
\end{figure}

Similarly, we split the convolution integral for the fourth term into the 4 regions $R_1=\{-1\leq -x_2 \leq x_1 \leq -\frac{1}{2}\}$, $R_2=\{-\frac{1}{2} \leq \frac{1}{2}-x_2 \le x_1 \le 0\}$, $R_3=\{0\leq  -2x_1 \le x_2 \le \frac{1}{2}-x_1 \leq 1\}$ and $R_4=\{ 0\leq -x_1 \le x_2 \le -2x_1 \leq 1\}$.
Then the fourth term can be computed as
\[
\int_0^1 \int_{-x_2}^0 \widehat g(x_1,x_2) x_2\,dx_1dx_2 =  \sum_{i,i'} X_{i,i'} \int_0^1 \int_{-x_2}^0 \mathcal I_x \,p_{i}(u) p_{i'}(u-x) x_2 \,du_1 du_2 dx_1 dx_2,
\]
where
\[
\mathcal I_x = \begin{cases} 
\int_{x_2-\frac{1}{2}}^{\frac{1}{2}} \int_{-\frac{1}{2}}^{x_1+\frac{1}{2}} & (x_1,x_2)\in R_1,\\
\int_{x_2-\frac{1}{2}}^{\frac{1}{2}} \int_{-\frac{1}{2}-u_2+x_1+x_2}^{\frac{1}{2}-u_2} &  (x_1,x_2)\in R_2 ,\\
\int_{x_1+x_2}^{\frac{1}{2}}\int_{-\frac{1}{2}}^{\frac{1}{2}-u_2}+ \int_{-x_1}^{x_1+x_2}\int_{x_1+x_2-u_2-\frac{1}{2}}^{\frac{1}{2}-u_2} + \int_{x_2-\frac{1}{2}}^{-x_1}\int_{x_1+x_2-u_2-\frac{1}{2}}^{x_1+\frac{1}{2}} & (x_1,x_2)\in R_3,\\
\int_{-x_1}^{\frac{1}{2}} \int_{-\frac{1}{2}}^{\frac{1}{2}-u_2}+ \int_{x_1+x_2}^{-x_1}\int_{-\frac{1}{2}}^{x_1+\frac{1}{2}} + \int_{x_2-\frac{1}{2}}^{x_1+x_2}\int_{x_1+x_2-u_2-\frac{1}{2}}^{x_1+\frac{1}{2}}& (x_1,x_2)\in R_4.
\end{cases}
\]

In Table~\ref{table:firstbounds} we show the bounds on $c_{3,1}$ obtained with this parametrization for degree~$d$. This proves the $(n, m) = (3,1)$ case of Theorem~\ref{main}. As ancillary files to the arXiv version of this paper we give the GAP source code to compute the polynomials $p_i$ in exact arithmetic and a Julia file to compute the above bound rigorously by solving the linear system from Section~\ref{sec:symrankone} in ball arithmetic.  

\begin{table}
\caption{Upper bounds on the best possible value for $c_{3,1}$ obtained using a parametrization using polynomials of degree $d$.}
\label{table:firstbounds}
\begin{center}
\begin{tabular}{cc}
\toprule
$d$ & $c_{3,1}$\\
\midrule
$10$ & $0.077516654$ \\
$20$ & $0.077222625$ \\
$30$ & $0.077206761$ \\
$40$ & $0.077200000$ \\
$50$ & $0.077198398$ \\
$60$ & $0.077197284$ \\
\bottomrule
\end{tabular}
\end{center}
\end{table}

\section{Parametrization using shifts}\label{sec:setup}

Now we consider the functions $g_\lambda$ obtained by shifting the Fourier transform of the indicator function of $\frac{1}{m}H_{n-1}$ by $\lambda \in m\z^{n-1}$:
\[
g_\lambda(x) = \widehat{\rchi_{\frac1m H_{n-1}}}(x - \lambda) = \frac{1}{m^{n-1}}\widehat{\rchi_{H_{n-1}}}\left(\frac1m(x-\lambda)\right). 
\]
Then $g_\lambda(x)$ is the Fourier transform of $\rchi_{\frac1m H_{n-1}}(\cdot) e^{-2\pi i \langle \lambda, \cdot \rangle}$, so $g_\lambda(\gamma x) = g_{\gamma^{-1} \lambda}(x)$ for all $\gamma$ in the group $\Gamma_n$ as defined in Section~\ref{sec:symrankone}. 

We observe that for $n=2$, the functions $g_\la$ for $\lambda \in \Lambda=m\z^{n-1}$ form an orthogonal basis of $\mathrm{PW}(\frac{1}{m}H_{n-1})$ with respect to the Lebesgue $L^2$-norm. This, however is not true for $n \ge 3$. For $n=3$, a classical result of Venkov \cite{venkov} shows that it is an orthogonal basis if we take $\Lambda$ as the dual lattice of 
\[
\{(u/2+v,u/2-v/2):u,v\in \frac{1}{m}\z\}.
\]
For $n\geq 4$, a recent result \cite{greenfeld,lev} implies that no set of translations $\Lambda$ exists for which $\{g_\lambda\}_{\lambda\in\Lambda}$ is an orthogonal basis of $\mathrm{PW}(\frac{1}{m}H_{n-1})$, otherwise $H_{n-1}$ would need to be centrally symmetric, have only centrally symmetric faces and tile the space by translations, but none of these properties hold. In any case, in practice, using $\Lambda=m\z^{n-1}$ works reasonably well since $\{g_\lambda\}_{\lambda\in\Lambda}$ is an orthogonal basis of $\mathrm{PW}(\frac{1}{m}[-1/2,1/2]^{n-1})$ which contains $\mathrm{PW}(\frac{1}{m}H_{n-1})$.

Let $F$ be a finite-dimensional, $\Gamma_n$-invariant subspace of $\mathrm{span}\{g_\lambda : \lambda \in m\z^{n-1}\}$, and again use the parametrization
\[
g(x) = \sum_{i,i'} X_{i,i'} g_{i}(x) g_{i'}(x),
\]
where $\{g_i\}$ is a basis for the $\Gamma_n$-invariant functions in $F$.

To set up the linear system from Section~\ref{sec:symrankone} we need to compute the matrix $A_{i,i'} = \nu_n(g_{i}g_{i'})$ and the vector $b_i = g_{i}(0)$. By Lemma~\ref{lem:simplesdp} the optimal function then is given by 
\[
g(x) = \frac{(\sum_i c_i g_{i}(x))^2}{(c^{\sf T} b)^2}
\]
with $c$ a solution to $Ac = b$. 

In Appendix~\ref{sec:fourier} we calculate the Fourier transform of $\rchi_{H_{n-1}}(x)$ for general $n$. This function has removable singularities whenever $x_i= 0$ for some $i$ or $x_i=x_j$ for some $i \neq j$. The function $g_i$ is a linear combination of shifts of the Fourier transform, and thus can also have removable singularities. To compute $g_i(0)$ we consider each term individually, check whether the denominator evaluates to zero and if it does, we use repeated applications of L'Hôpital's rule.

To compute the entries $A_{i,i'} = \nu_n(g_{i}g_{i'})$, we need to compute integrals of the form $\int_{\r^{n-1}} f(x) \, dx$, where $f(x) = g_i(x)g_{i'}(x)W_n(x,0)$. To simplify the computations we change variables to $m^{-1}x$. Since the Fourier transform of $g_{i}(m x)$ is supported in $[-1/2,1/2]^{n-1}$ and the Fourier transform of $W_n(mx,0)$ is supported in $[-m,m]^{n-1}$, the Fourier transform of $f$ is supported in $[-(m+1),(m+1)]^{n-1}$. By Poisson summation, we then have
\[
\int_{\r^{n-1}} f(x) \, dx
 =  \widehat f(0)
= \sum_{k \in (m+1)\z^{n-1}} \widehat f(k)
=
\frac{1}{(m+1)^{n-1}}\sum_{k \in \frac{1}{m+1}\z^{n-1}} f(k).
\]

By again using Poisson summation, we have
\begin{align*}
\sum_{k \in \frac{1}{m+1}\z^{n-1}} f(k)
&= (m+1)^{n-1}\widehat f(0) \\
&=(m+1)^{n-1} \sum_{k \in (m+1)\z^{n-1}} \widehat f(k) e^{2\pi i \langle k, s \rangle}\\
&= \sum_{k \in \frac{1}{(m+1)}\z^{n-1}} f(k+s)  
\end{align*}
for any $s \in \r^{n-1}$. To avoid evaluation of $f$ on a removable singularity, we set 
\[
s = \left(\frac{1}{m(m+1)n}, \frac{2}{m(m+1)n}, \ldots, \frac{n-1}{m(m+1)n}\right),
\]
and compute the integral as
\[
\int_{\r^{n-1}} f(x) \, dx = \frac{1}{(m+1)^{n-1}}\sum_{k \in \frac{1}{(m+1)}\z^{n-1}} f(k+s).
\]

Because of the sines and cosines in the formula of $\widehat \rchi_{H_{n-1}}$ (see Appendix~\ref{sec:fourier}), each series $\sum_k f(k+s)$ can be split into $(2(m+1))^{n-1}$ series of rational functions over the lattice $\z^{n-1}$, one for each value appearing for the sines and cosines. 

Computing these series exactly is the computational bottleneck in our approach. Maple is the only system among the computer algebra systems we tried in which we could successfully compute the sums of all the series, and for this we needed several ad hoc tricks. For instance, for series over $\z^3$, we first compute the outer series over $\z$, and then use partial fraction decomposition to simplify the terms before summing over $\z^2$. Here a complicating factor was that after partial fraction decomposition the series of some terms no longer converge, which meant we needed to recombine certain terms in ad hoc ways before summing over $\z^2$.

Unfortunately, Maple 2022.2 contains a bug where it acts nondeterministically and sometimes gives a completely wrong answer. We, therefore, check that the sum computed with Maple deviates at most $0.1\%$ from the sum we obtain by truncating the series to the box $[-C/(m+1), C/(m+1)]^{n-1}$, where for $n=3$ we use $C=400$ (or $C=1300$ for the cases where this is not sufficient) and for $n=4$ we use $C = 25$. We give text files containing the series in Maple syntax and  for each series an interval containing the correct sum given by decimal expressions of a midpoint and a radius. We also give a Julia file with the code to obtain the rigorous bounds using the data in these text files.

For $(n,m)=(3,1)$ and $(n,m)= (3,2)$, Table~\ref{table:secondbounds} gives the results for the subspaces $F = \mathrm{span}\{ g_\lambda : \lambda \in S_d \}$, where 
\[
S_d = \Gamma_n \{ m\lambda \in \z^{n-1} : \|\lambda\|_1 \leq d\}.
\]
This proves the case $(3,2)$ of Theorem~\ref{main} (the case $(3,1)$ was already done in Section~\ref{sec:fourieropt}). For $(n,m) = (4,1)$, we only compute the series exactly for $\lambda = 0$ (partly because of the aforementioned Maple bug), which gives $c_{4,1} \leq 447/3500 \approx 0.1277$ as used in Theorem~\ref{main}. Numerically we estimate that by using more values of $\lambda$ this can be improved to  $c_{4,1} \leq 0.026$. This would improve the lower bound in Theorem~\ref{main} from $0.9787$ to $0.9956$.

\begin{table}[ht]
\caption{Upper bounds on the best possible value for $c_{n,m}$ obtained through a parametrization using shifts $S_d^2$.} 
\label{table:secondbounds}
\begin{center}
\begin{tabular}{ccc}
\toprule
$d$ & $c_{3,1}$ & $c_{3,2}$ \\
\midrule
0      & 0.144444445 & 1.488888889\\
1      & 0.077710979 & 1.401604735\\
2      & 0.077580416 & 1.401343568 \\
3      & 0.077261926 & 1.400616000 \\
4      & 0.077247720 & 1.400581457 \\
5      & 0.077213324 & 1.400506625 \\
\bottomrule
\end{tabular}
\end{center}
\end{table}

\begin{appendix}

\section{The Fourier transform of \texorpdfstring{$\rchi_{H_{n-1}}$}{Hn-1}}
\label{sec:fourier}

To compute the Fourier transform of the indicator function of ${H_{n-1}}$ we use the Fourier transform of the indicator function of the simplex $S_n=\{x\in \r^{n} : x_j\geq 0, \, x_1 + \ldots + x_n\leq 1\}$ as computed in \cite[Eq. 18]{borda_lattice_points}:
\[
\ft \rchi_{S_n}(y)= \frac{(-1)^{n+1}}{(2\pi i)^n}\sum_{j=1}^n \frac{1-e^{-2\pi i y_j}}{y_j \prod_{i\neq j}(y_j-y_i)}.
\]
The Fourier transform of the indicator function of the cross-polytope $C_n=\{x\in \r^{n} : |x_1|+\ldots+|x_n|\leq 1\}$ is given by
\begin{align*}
\ft \rchi_{C_n}(y) & = \sum_{\si \in \{\pm 1\}^n} \ft \rchi_{S_n}(\si_1 y_1,\ldots,\si_n y_n).
\end{align*}
If we consider each term in $\ft \rchi_{S_n}(y)$ individually, then it is simple to deduce that
$$
\sum_{\si \in \{\pm 1\}^n} \frac{1-e^{-2\pi i \si_j y_j}}{\si_j y_j \prod_{i\neq j}(\si_j y_j-\si_i y_i)} = \sum_{\si\in \{\pm 1\}} (2\si y_j)^{n-1}\frac{1-e^{-2\pi i \si y_j}}{\si y_j \prod_{i\neq j}(y_j^2-y_i^2)},
$$
while the latter depends on the parity of $n$. This implies that
\[
\ft \rchi_{C_n}(y) = \begin{cases} \pi^{-n}(-1)^{(n-1)/2}\sum_{j=1}^n \frac{y_j^{n-2}\sin(2\pi y_j)}{\prod_{i\neq j}(y_j^2-y_i^2)} & \text{ if $n$ is odd, and}\\ 
\pi^{-n}(-1)^{n/2-1}\sum_{j=1}^n \frac{2y_j^{n-2}\sin(\pi y_j)^2}{\prod_{i\neq j}(y_j^2-y_i^2)} & \text{ if $n$ is even.}\end{cases}
\]
Finally, notice that $H_{n-1}$ can be identified with $C_n \cap \{x : x_1+\ldots+x_n=0\}$ and thus one can rigorously justify that 
$$
\ft \rchi_{H_{n-1}}(y_1,\dots,y_{n-1})=\lim_{T\to\infty} \int_{-T}^T \ft \rchi_{C_n}(y_1+t,\dots,y_{n-1}+t,t)\, dt 
$$
The function under the integral sign above only has simple poles at $t=-\tfrac{y_j+y_k}2$ for $k\neq j$ and $t=-y_j/2$ (for generic $y\in \r^{n-1}$). A routine application of the residue theorem and a grouping of terms shows that $\ft \rchi_{H_{n-1}}(y)$ is equal to
\[
\frac{(-1)^{ (n-1)/2 }}{2^{n-2}\pi^{n-1}}\bigg[-\sum_{1 \leq j < k \leq n-1} \frac{(y_j-y_k)^{n-3}\cos(\pi(y_j-y_k))}{y_j y_k\prod_{i\neq j,k}(y_j-y_i)(y_k-y_i)} + \sum_{j=1}^{n-1} \frac{y_j^{n-3}\cos(\pi y_j)}{\prod_{i\neq j}(y_j-y_i)y_i}\bigg]
\]
if $n$ is odd, and equal to
\[
\frac{(-1)^{n/2-1}}{2^{n-2}\pi^{n-1}}\bigg[\sum_{1 \leq j < k \leq n-1} \frac{(y_j-y_k)^{n-3}\sin(\pi(y_j-y_k))}{y_j y_k\prod_{i\neq j,k}(y_j-y_i)(y_k-y_i)} + \sum_{j=1}^{n-1} \frac{y_j^{n-3}\sin(\pi y_j)}{\prod_{i\neq j}(y_j-y_i)y_i}\bigg]
\]
if $n$ is even.

\end{appendix}

\section*{Acknowledgements}

We are deeply thankful of the anonymous referees for valuable suggestions on the manuscript and additional references.

\providecommand{\bysame}{\leavevmode\hbox to3em{\hrulefill}\thinspace}
\providecommand{\MR}{\relax\ifhmode\unskip\space\fi MR }
\providecommand{\MRhref}[2]{%
  \href{http://www.ams.org/mathscinet-getitem?mr=#1}{#2}
}
\providecommand{\href}[2]{#2}

\end{document}